\newtheorem{theorem}{Theorem}[section]
\newtheorem*{theorem*}{Theorem}
\newtheorem{lemma}[theorem]{Lemma}
\theoremstyle{definition}
\newtheorem*{definition*}{Definition}
\newtheorem*{remark*}{Remark}
\begin{document}

\author{Andreas F. Holmsen}

\date{\today}

 \address{Andreas F. Holmsen, \hfill \hfill \linebreak 
 Department of Mathematical Sciences,  \hfill \hfill \linebreak
 KAIST, 
 Daejeon, South Korea.  \hfill \hfill }
 \email{andreash@kaist.edu}

\title{Large cliques in hypergraphs with forbidden substructures}

\begin{abstract} 
A result due to Gy{\'a}rf{\'a}s, Hubenko, and Solymosi (answering a question of Erd{\H o}s) states that if a graph $G$ on $n$ vertices does not contain $K_{2,2}$ as an induced subgraph yet has at least $c\binom{n}{2}$ edges, then $G$ has a complete subgraph on at least $\frac{c^2}{10}n$ vertices. In this paper we suggest a ``higher-dimensional'' analogue of the notion of an induced $K_{2,2}$ which allows us to generalize their result to $k$-uniform hypergraphs. Our result also has an interesting consequence in discrete geometry. In particular, it implies that the {\em fractional Helly theorem} can be derived as a purely combinatorial consequence of the {\em colorful Helly theorem}.
\end{abstract}

\maketitle 

\section{Introduction}

Among the classical problems in extremal graph theory are the {\em Tur{\'a}n type extremal problems}. They ask for the maximum number of edges $\mbox{ex}(n,H)$ in a graph on $n$ vertices provided it does not contain some fixed graph $H$ as a subgraph. In the case when $H = K_m$, the complete graph on $m$ vertices, the answer is given by Turan's theorem \cite{turan}, which also characterizes the extremal graphs which obtain the maximum $\mbox{ex}(n, K_m)$ for all $n$ and $m$. More generally, if the chromatic number $\chi(H)\geq 3$, we have $\mbox{ex}(n,H) = (1-\frac{1}{\chi(H)-1})\binom{n}{2}+o(n^2)$. This is the fundamental Erd{\H o}s--Stone--Simonovits theorem \cite{erdos-simonovits, erdos-stone}. While their result also holds for bipartite $H$, it only tells us that $\mbox{ex}(n,H) = o(n^2)$, which is less satisfactory since stronger estimates exist. For instance, the K{\H o}vari--S{\'o}s--Tur{\'a}n theorem \cite{kovari} states that for the complete bipartite graph $K_{s,t}$ we have $\mbox{ex}(n,K_{s,t})<c_{s,t}n^{2-\frac{1}{s}}$. There are many long-standing unsolved questions in this area and we refer the reader to the extensive survey \cite{furedi} for more information and further references. 

\medskip

Recently, Loh, Tait, Timmons, and Zhou \cite{loh} introduced a new and natural line of investigations related to the Tur{\'a}n type problems. For a pair of graphs $H$ and $F$, they proposed the problem of determining the maximum number of edges in a graph on $n$ vertices, subject to the condition that we simultaneously forbid $H$ as a subgraph and $F$ as an {\em induced} subgraph. One of their main results \cite[Theorem 1.1]{loh} addresses the case when $H=K_r$ and $F = K_{s,t}$, where they obtain the same asymptotic upper bound as in the K{\H o}vari-S{\'o}s-Tur{\'a}n theorem (with a different constant, now depending on $r$, $s$, and $t$). The case $s=t=2$ is interesting in its own right, and is closely related to the following result due to Gy{\'a}rf{\'a}s, Hubenko, and Solymosi \cite{ghs}.

\begin{theorem*}[Gy{\'a}rf{\'a}s-Hubenko-Solymosi]\label{GHSthm}
Let $G$ be a graph on $n$ vertices and at least $c\binom{n}{2}$ edges. If $G$ does not contain $K_{2,2}$ as an induced subgraph, then $\omega(G)\geq \frac{c^2}{10}n$.
\end{theorem*}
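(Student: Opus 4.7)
The key structural observation driving the proof is that, since $G$ is induced-$K_{2,2}$-free, the common neighborhood $N(u)\cap N(v)$ of any non-edge $\{u,v\}$ must induce a clique in $G$: a non-adjacent pair inside $N(u)\cap N(v)$ would, together with $u,v$, span an induced $K_{2,2}$. In particular $|N(u)\cap N(v)|\leq \omega(G)$ for every non-edge, so to prove $\omega(G)\geq c^2 n/10$ it suffices to exhibit a single non-edge whose common neighborhood has size at least $c^2n/10$.

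My plan is a double count of cherries (paths of length $2$). Counting by the center vertex, convexity yields
\[
\sum_v \binom{d(v)}{2} \;\geq\; n\binom{\bar d}{2}, \qquad \bar d \;=\; \tfrac{2\,e(G)}{n}\;\geq\; c(n-1),
\]
a quantity of order $c^2 n^3/2$. Counting instead by the two leaves and splitting according to their adjacency,
\[
\sum_v \binom{d(v)}{2} \;=\; 3T \;+\; \sum_{\{u,w\}\notin E}|N(u)\cap N(w)| \;\leq\; 3T \;+\; \omega(G)\left(\binom{n}{2}-e(G)\right),
\]
where $T$ is the number of triangles. An upper bound of the shape $3T\leq \tfrac{9}{10}\sum_v\binom{d(v)}{2}$ would force the non-edge cherry sum to be of order $c^2n^3/20$, and averaging over the at most $\binom{n}{2}$ non-edges then produces one with codegree $\geq c^2n/10$, furnishing a clique of that size.

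The main obstacle is bounding $T$, since the codegree of an \emph{edge} is not controlled by $\omega(G)$ and cannot be estimated by the structural observation alone. My strategy is to exploit the self-similar nature of the hypothesis: for each vertex $v$, the induced subgraph $G[N(v)]$ is itself induced-$K_{2,2}$-free with clique number at most $\omega(G)-1$. Applying the theorem inductively (on $n$) to $G[N(v)]$ bounds its edge density by $\sqrt{10(\omega(G)-1)/d(v)}$; summing $|E(G[N(v)])|$ over $v$ and invoking Cauchy--Schwarz then controls $3T=\sum_v |E(G[N(v)])|$. A cleaner alternative, avoiding induction, is to extract an independent set $I$ of high-degree vertices via a dichotomy on the set $H=\{v:d(v)\geq cn/2\}$ (if $H$ already contains a clique of the target size we are done, otherwise use that $G[H]$ must have large independence number), and apply the identity $\sum_x \binom{|N(x)\cap I|}{2}=\sum_{\{u,w\}\subset I}|N(u)\cap N(w)|$ together with convexity to produce a pair in $I$ with common neighborhood of size $\geq c^2n/10$. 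In either route the most delicate step is calibrating the numerical constants so that the estimates close up with the precise factor $10$ stated in the theorem.
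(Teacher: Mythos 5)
Your opening observation---that in an induced-$K_{2,2}$-free graph the common neighbourhood of any non-edge induces a clique---is exactly the observation the paper's argument hinges on in its final step, and your overall goal (exhibit one non-edge of codegree at least $\tfrac{c^2}{10}n$) matches the paper's. Where you diverge is the count, and that is where the gap opens. The paper (which proves the stronger bound $(1-\sqrt{1-c})^2n\geq\tfrac{c^2}{4}n$) never counts all cherries: for each vertex $v$ it counts only the \emph{missing} edges inside $N(v)$. A maximal matching $\tau_1,\dots,\tau_{\mu_v}$ of missing edges in $G[N(v)]$ leaves a clique uncovered, so $2\mu_v\geq |N(v)|-\omega(G)+1$, and the induced-$K_{2,2}$-free hypothesis forces an additional missing edge between every two matched pairs, giving at least $\binom{\mu_v+1}{2}$ missing edges inside $N(v)$. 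Summing over $v$, applying Jensen, and pigeonholing against the at most $(1-c)\binom{n}{2}$ missing edges of $G$ produces one missing edge lying in many neighbourhoods; its common neighbourhood is the desired clique. This restriction to missing edges is precisely what makes the triangle term $3T$ never appear, whereas your version of the double count must subtract it.

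Neither of your two routes for controlling $3T$ is complete, and one of them fails. Route (b) rests on the assertion that if $G[H]$ contains no clique of size $\tfrac{c^2}{10}n$ then it ``must have large independence number.'' No justification is offered and none is available: small clique number does not force a large independent set, and the induced-$K_{2,2}$-free hypothesis only yields a large matching of \emph{non-edges} (whose endpoints may be pairwise adjacent across different pairs), not an independent set. Since your convexity identity needs an independent set of high-degree vertices of size $\Omega(1/c)$ to output a codegree of order $c^2n$, this step would break. Route (a) is more promising: the inductive bound $e(G[N(v)])\leq\sqrt{10\omega/d(v)}\,\binom{d(v)}{2}$ is legitimate, and a careful optimization over degree sequences (via $\sum_v d(v)^{3/2}\leq(\sum_v d(v))^{1/2}(\sum_v d(v)^2)^{1/2}$, with the near-regular case extremal) shows the required inequality $3T\leq\tfrac{9}{10}\sum_v\binom{d(v)}{2}$ follows from $\omega\leq\tfrac{81}{1000}c\,n$, which is implied by the contradiction hypothesis $\omega<\tfrac{c^2}{10}n$ only when $c\leq 0.81$; the range $c>0.81$ needs a separate (easy) argument via a maximal matching of missing edges in $V$, which you do not mention, and you explicitly leave the calibration open. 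So as written the proposal is a plan with a genuinely unresolved core step rather than a proof. Replacing the all-pairs cherry count by the paper's missing-edge count is the single change that makes the scheme close cleanly, with a better constant.
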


Here $\omega(G)$ denotes the maximum number of vertices in a clique contained in $G$. In the aforementioned paper by Loh {\em et al.}, they also give an extension the Gy{\'a}rf{\'a}s--Hubenko--Solymosi Theorem to the case when the forbidden induced subgraph is $K_{2,t}$ \cite[Theorem 1.3]{loh}.

\medskip 

The goal of this paper is to extend the Gy{\'a}rf{\'a}s--Hubenko--Solymosi theorem in another direction, more specifically, for $k$-uniform hypergraphs. Throughout the paper we use the following standard notation and terminology. For a positive integer $k$ we let $[k]$ denote the set $\{1,\dots, k\}$. For a finite set $S$ we let $\binom{S}{k}$ denote the set of all $k$-tuples (i.e. $k$-element subsets) of $S$. A {\em $k$-uniform hypergraph} $H=(V,E)$ consists of a finite set of vertices $V$ and a set of edges $E\subset \binom{V}{k}$. A subset $S\subset V$ forms a {\em clique} in $H$ if $\binom{S}{k}\subset E$, and we let $\omega(H)$ denote the maximum number of vertices in a clique in $H$.

In order to avoid using ceiling and floor functions in calculations, we extend the binomial coefficient as the continuous convex function
\[{\binom{x}{k}} = \begin{cases} \frac{x(x-1)\cdots(x-k+1)}{k!} & x\geq k-1 \\ 0 & x<k-1.\end{cases}\] 

\subsection*{Results} We start by giving a new proof of the  Gy{\'a}rf{\'a}s--Hubenko--Solymosi theorem which has the advantage of producing a quantitative improvement. 

\begin{theorem} \label{1improvement}
Let $G$ be a graph on $n$ vertices and at least $\alpha\binom{n}{2}$ edges. If $G$ does not contain $K_{2,2}$ as an induced subgraph, then $\omega(G) \geq (1-\sqrt{1-\alpha})^2n$.
\end{theorem}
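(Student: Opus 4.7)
The approach should hinge on the key structural consequence of the no-induced-$K_{2,2}$ hypothesis: for any two non-adjacent vertices $u, v$, the common neighborhood $N(u) \cap N(v)$ must be a clique --- otherwise a non-edge inside $N(u) \cap N(v)$ together with $\{u, v\}$ would produce an induced $K_{2,2}$. Adjoining either $u$ or $v$ to this clique shows that $|N(u) \cap N(v)| \leq \omega(G) - 1$.

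The plan is to force a large common codegree across some non-adjacent pair, and thus exhibit a clique of the desired size. Concretely, I would fix a vertex $v_0$ of maximum degree $d = d(v_0) \geq 2|E|/n$ and partition $V = \{v_0\} \cup N \cup \bar{N}$, where $N = N(v_0)$ and $\bar{N} = V \setminus (N \cup \{v_0\})$. Since every $u \in \bar{N}$ is non-adjacent to $v_0$, the set $N(u) \cap N$ is a clique in $G[N]$ of size at most $\omega - 1$. Averaging over $u \in \bar{N}$ gives the central inequality
\[
\omega - 1 \;\geq\; \frac{1}{n - 1 - d} \sum_{u \in \bar{N}} |N(u) \cap N| \;=\; \frac{e(\bar{N}, N)}{n - 1 - d}.
\]

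The next step is to bound $e(\bar{N}, N)$ from below using the edge-count decomposition $|E| = d + e(G[N]) + e(\bar{N}, N) + e(G[\bar{N}])$ together with the hypothesis $|E| \geq \alpha \binom{n}{2}$. Trivial or slightly refined upper bounds on $e(G[N])$ and $e(G[\bar{N}])$ convert this into an inequality relating $\omega$, $d$, $n$, and $\alpha$. Rearranging this relation yields a quadratic in $\omega$ (or equivalently in $\sqrt{\omega/n}$) that, after solving, should collapse to the boundary identity $(\alpha n + \omega)^2 \leq 4\omega n$, which is exactly the form equivalent to $\omega \geq (1 - \sqrt{1-\alpha})^2 n$.

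The main obstacle I expect is controlling the internal edge counts sharply enough: using only the trivial estimates $e(G[N]) \leq \binom{d}{2}$ and $e(G[\bar{N}]) \leq \binom{n-1-d}{2}$ loses a factor that falls short of the stated quantitative bound. Overcoming this likely requires one of three refinements: applying the $K_{2,2}$-freeness recursively inside $G[N]$ (which is itself induced-$K_{2,2}$-free with clique number at most $\omega - 1$); replacing the single maximum-degree choice of $v_0$ with a weighted or random average that exploits convexity of $\binom{x}{2}$ against the degree sequence; or double counting the quantity $\sum_{\{u,v\} \notin E} |N(u) \cap N(v)|$ via the identity $\sum_{w} \binom{d(w)}{2} - 3T$ and combining it with the codegree bound globally. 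I anticipate the final argument to be a compact calculation once the correct averaging and substitution $\beta = 1 - \sqrt{1-\alpha}$ are in place, with the optimization producing the clean $(1-\sqrt{1-\alpha})^2 n$ form.
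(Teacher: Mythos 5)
Your opening structural observation is correct and is in fact the engine of the paper's proof, but the paper reads it in the opposite direction: rather than using $|N(u)\cap N(v)|\leq \omega-1$ for every non-edge $\{u,v\}$ as an upper bound, it shows that \emph{some} non-edge has common neighborhood of size at least $(1-\sqrt{1-\alpha})^2 n$, and that common neighborhood is then itself a clique. The quantity both arguments must control is the same, namely $\sum_{\{u,v\}\notin E}|N(u)\cap N(v)| = \sum_{v} m_v$, where $m_v$ is the number of missing edges inside $N(v)$. Your proposed lower bound $\sum_w\binom{d(w)}{2}-3T$ for this quantity is where the plan breaks: that difference counts induced paths of length two and can be tiny or zero in dense graphs, so without further input it gives nothing. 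Similarly, the single max-degree-vertex version with trivial bounds on $e(G[N])$ and $e(G[\bar N])$ yields roughly $\omega\gtrsim\frac{2\alpha-1}{2}\,n$, which is vacuous for $\alpha\leq 1/2$ and falls short of $(1-\sqrt{1-\alpha})^2 n$ for $\alpha$ close to $1$ --- as you yourself anticipate.

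The missing idea is a \emph{lower} bound on $m_v$ in terms of $|N(v)|$ and $\omega$, obtained in two steps from the $K_{2,2}$-freeness. First, the vertices of $N(v)$ not covered by a maximal matching of missing edges of $G[N(v)]$ form a clique, so if $\omega(G)\leq\beta n$ the matching has $\mu_v\geq\frac{1}{2}\left(|N(v)|-\beta n+1\right)$ members. Second --- and this is the step none of your three proposed refinements supplies --- any two disjoint missing edges inside $N(v)$ must be joined by a third missing edge (else the four vertices induce a $K_{2,2}$), so $m_v\geq\binom{\mu_v+1}{2}$, i.e.\ \emph{quadratic} in $\mu_v$. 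Summing over $v$, applying Jensen's inequality, and comparing with the total count $|M|\leq(1-\alpha)\binom{n}{2}$ of missing edges of $G$ then forces one missing edge to lie in at least $\beta n$ common neighborhoods, and that common neighborhood is a clique by your own observation. Without the quadratic gain from the second step the averaging does not close, so as written the proposal is a program whose decisive lemma is still missing.
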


{\em Remark.} It is interesting to note that if $G$ is a chordal graph on $n$ vertices and $\alpha\binom{n}{2}$ edges, then $\omega(G)\geq (1-\sqrt{1-\alpha})n$, which is best possible. (This is a result due to Katchalski and Abbot \cite{katch-abb}, and was also shown in \cite{ghs}.) The appearance of the factor $(1-\sqrt{1-\alpha})$ in our new bound seems to be a coincidence, and the problem of determining the optimal linear factor in the general case of no induced $K_{2,2}$, even for specific values of $\alpha$, remains an open, although some progress has been made in \cite{gyar-sark}.  

\medskip

The main advantage of our new proof of the  Gy{\'a}rf{\'a}s--Hubenko--Solymosi theorem is that it can be extended to $k$-uniform hypergraphs. This is interesting because it has implications in discrete geometry and combinatorial topology, more specifically, with respect to the {\em colorful} and {\em fractional} versions of Helly's theorem \cite{col-hell, eckhoff, floy, helly, kalai-upper, top-col-hel, katch-liu}. This connection will be discussed further in Section \ref{sec:frac}.

\medskip

Let $H=(V,E)$ be a $k$-uniform hypergraph. We call the set $M = \binom{V}{k}\setminus E$ the set of {\em missing edges} of $H$. The following definition extends the notion of an induced $K_{2,2}$ in a graph in several ways.

\begin{definition*} \label{complete set}
Let $H$ be a $k$-uniform hypergraph and $m\geq k$ an integer. A family $\{\tau_1, \dots, \tau_m\}\subset M$ is called a {\em complete $m$-tuple of missing edges} if 
\begin{enumerate}
    \item \label{disjointness} $\tau_i\cap \tau_j = \emptyset$ for all $i\neq j$, and
    \item \label{transversal} $\{t_1, \dots, t_m\}$ is a clique in $H$ for all $t_i\in \tau_i$ and all $i\in [m]$.
\end{enumerate} 
\end{definition*} 

{\em Remark.} Note that for $m=k$, condition (\ref{transversal}) simply says that $\{t_1, \dots, t_k\}$ is an edge in $H$ for every choice $t_1\in \tau_1, \dots, t_k\in \tau_k$. In the case of graphs ($k=2$), a complete $m$-tuple of missing edges is equivalent to an induced $K_2(m)$, that is, the complete multipartite graph on $m$ vertex classes each of size two. However, for $k>2$, a complete $m$-tuple of missing edges should not be thought of as an induced hypergraph since the definition only speaks about edges containing at most one vertex from each $\tau_i$.

\medskip

For a $k$-uniform hypergraph $H$ and $m\geq k$, let $c_m(H)$ denote the number of cliques on $m$ vertices in $H$. In particular, $c_k(H)$ denotes the number of edges in $H$. We may now state our main result.

\begin{theorem} \label{k-uniform thm}
For any $m\geq k\geq 2$ and $\alpha \in (0,1)$, there exists a constant $\beta = \beta(\alpha, k , m) > 0$ with the following property:
Let $H$ be a $k$-uniform hypergraph on $n$ vertices and $c_m(H) \geq \alpha\binom{n}{m}$. If $H$ does not contain a complete $m$-tuple of missing edges, then $\omega(H)\geq \beta n$. 
\end{theorem}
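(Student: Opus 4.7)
The natural strategy is induction on $m\geq k$, with the base case $m=k$ carrying the essential combinatorial content. This is because the hypothesis ``no complete $m$-tuple of missing edges'' weakens as $m$ grows (any complete $m$-tuple contains a complete $k$-subtuple), while the hypothesis $c_m(H) \geq \alpha\binom{n}{m}$ becomes more restrictive; so the most delicate version of the statement is when $m$ and $k$ coincide.

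\textbf{Base case ($m=k$).} Here $|E(H)| \geq \alpha\binom{n}{k}$ and $H$ has no $k$ pairwise disjoint missing edges whose $k^k$ transversals are all edges. I would try to adapt the proof of Theorem \ref{1improvement}. The graph proof leans on the fact that for a non-edge $uv$ in a graph with no induced $K_{2,2}$, the set $N(u)\cap N(v)$ is a clique. The $k$-uniform analogue I would aim for is a link-recursion: by averaging, some $(k-1)$-subset $R\in\binom{V}{k-1}$ is contained in at least $\alpha(n-k+1)$ edges, so the link $L_R=\{v:R\cup\{v\}\in E(H)\}$ is large. The no-complete-$k$-tuple hypothesis on $H$ should impose on $L_R$ an analogous ``induced $K_{2,2}$-free'' (or higher-dimensional) property whose violation would pull back—via $R$—to a complete $k$-tuple of missing edges in $H$. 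Iterating the link construction, one should either find the forbidden configuration or land in a graph where Theorem~\ref{1improvement} applies directly. The clique found in the final link, together with $R$, yields a large clique in $H$.

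\textbf{Inductive step ($m>k$).} Double counting gives $m\cdot c_m(H) \leq (n-m+1)\, c_{m-1}(H)$, hence $c_{m-1}(H)\geq \alpha\binom{n}{m-1}$. The obstacle is that ``no complete $m$-tuple'' does not imply ``no complete $(m-1)$-tuple,'' so one cannot invoke the inductive hypothesis blindly. I would resolve this by a dichotomy: either $H$ already has no complete $(m-1)$-tuple of missing edges (apply the inductive hypothesis directly) or $H$ contains a complete $(m-1)$-tuple $\tau_1,\dots,\tau_{m-1}$. In the second case, the no-complete-$m$-tuple assumption says every missing edge $\tau\subset V\setminus\bigcup\tau_i$ fails to be a ``compatible extender''—some transversal through $\tau$ is not an $m$-clique—and this structural constraint, combined with the large supply of $m$-cliques, should either force a dense sub-hypergraph to which induction applies or allow a direct clique construction from the transversal structure of the $\tau_i$.

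\textbf{Main obstacle.} The hard part is the base case: making the no-complete-$k$-tuple hypothesis propagate cleanly through link operations so that an honest iterative argument produces a clique of size linear in $n$ with an explicit constant $\beta(\alpha,k,m)>0$. Secondary difficulty lies in case (ii) of the inductive step, where a complete $(m-1)$-tuple exists but admits no compatible extender; the combinatorial bookkeeping of ``compatible extenders'' is where the technical crux of that step sits.
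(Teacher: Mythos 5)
Your proposal is a plan with two acknowledged but unresolved gaps, and both gaps are real; neither the base case nor the inductive step works as described. For the base case, the link recursion does not propagate the hypothesis: the missing edges of $H$ are $k$-sets, while the forbidden configuration in a link of a vertex (a $(k-1)$-uniform hypergraph) would involve disjoint missing \emph{$(k-1)$-sets}, and a complete tuple of those does not lift to a complete $k$-tuple of missing edges of $H$ — the two structures live in different uniformities and there is no clean pullback. For the inductive step, your dichotomy stalls in case (ii): the existence of a single complete $(m-1)$-tuple somewhere in $H$ gives essentially no global leverage, and "should force a dense sub-hypergraph or a direct clique construction" is the entire content of the theorem restated, not an argument. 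Moreover, your framing that the case $m=k$ is the delicate one and that larger $m$ should reduce to it is not borne out: for $m>k$ the structural hypothesis is strictly weaker (no complete $m$-tuple does not forbid complete $k$-tuples), so the general case is not a corollary of the base case by any monotonicity.

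The idea you are missing is that one should not induct on $m$ or change the uniformity of $H$ at all. The paper first proves a counting lemma (Lemma \ref{missing k-tuples}): if $H$ has no complete $m$-tuple of missing edges, then every $S\subset V$ contains at least $\binom{m}{k}^{-1}\binom{\frac{1}{k}(|S|-\omega(H))}{k}$ missing edges — take a maximal matching $\tau_1,\dots,\tau_t$ of missing edges inside $S$ (so $kt\geq |S|-\omega(H)$), observe that each $m$-subset of $\{\tau_1,\dots,\tau_t\}$ forces a "crossing" missing edge, and count multiplicities. Then, instead of passing to links, one runs a reverse iteration on auxiliary families: starting from the family $F_m$ of $m$-cliques, Lemma \ref{iterated function} produces at each step a single missing edge $\tau$ and a still-dense family $F_{i-1}\subset\binom{V}{i-1}$ with $\sigma\cup\{t\}\in F_i$ for all $\sigma\in F_{i-1}$ and $t\in\tau$; the missing edge is found by applying the counting lemma to each common neighborhood $N_\sigma$, summing with Jensen, and averaging over the at most $\binom{n}{k}$ missing edges of $H$. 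After $m-1$ steps one holds $m-1$ pairwise disjoint missing edges and a set $F_1$ of size $\beta n$ every transversal-extension of which is an $m$-clique; $F_1$ must itself be a clique, since any missing edge inside it would complete a forbidden $m$-tuple. This architecture — collecting the disjoint missing edges one at a time while shrinking the arity of the clique family from $m$ down to $1$, always inside the original $k$-uniform $H$ — is exactly what replaces both your link recursion and your induction on $m$.
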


\subsection*{Outline of paper} In section \ref{sec:newproof} we give the new proof of the Gy{\'a}rf{\'a}s-Hubenko-Solymosi theorem. This proof contains all the main ideas needed for establishing Theorem \ref{k-uniform thm}, which will be done in section \ref{sec:hyper}. Finally, in section \ref{sec:frac} we review the (topological) colorful Helly theorem and the (topological) fractional Helly theorem and show how these are related via Theorem \ref{k-uniform thm}.

\section{Improving the Gy{\'a}rf{\'a}s-Hubenko-Solymosi theorem}\label{sec:newproof}

Here we prove Theorem \ref{1improvement}.
Let $G=(V,E)$ be a graph with $|V|=n$ and $|E|\geq \alpha\binom{n}{2}$. Recall that the missing edges are the elements of $M = \binom{V}{2}\setminus E$.

Let us suppose $\omega(G)\leq \beta n$, where $\beta = (1-\sqrt{1-\alpha})^2$, and that $G$ does not contain $K_{2,2}$ as an induced subgraph. Notice that for our choice of $\beta$, we have
\[(\alpha - \beta) = 2\sqrt{1-\alpha}\sqrt{\beta}.\]

We start by fixing a vertex $v$ and making some observations about its neighborhood $N_v=\{u\in V : uv\in E\}$ and the induced subgraph $G_v= G[N_v]$. The assumption that $G$ does not contain an induced $K_{2,2}$ implies that for every pair of (vertex) disjoint missing edges $\{\bar{e}_1,\bar{e}_2\}$ in $G_v$ there exists another missing edge $\bar{e}_3$ which has one vertex in common with $\bar{e}_1$ and one with $\bar{e}_2$. Letting
$m_v$ denote the total number of missing edges in $G_v$, and $\mu_v$ denote the maximum number of pairwise disjoint missing edges in $G_v$, we obtain
\[m_v\geq \mu_v + \binom{\mu_v}{2} = \binom{\mu_v+1}{2}.\]
Note that the vertices in $G_v$ not covered by a maximal mathcing of missing edges must form a clique and that $\omega(G_v)\leq \beta n-1$, which implies \[\mu_v+1\geq  \frac{|N_v|-\beta n+3}{2}.\]

Summing over all $v\in V$ and using $\sum \frac{|N_v|}{2}= |E| \geq \alpha\binom{n}{2}$, we have

\begin{align*}
\sum (\mu_v+1) & \geq \alpha\binom{n}{2} -\beta\frac{n^2}{2} + \frac{3n}{2}\\ 
 &= (\alpha - \beta)\frac{n^2}{2}+(3-\alpha)\frac{n}{2} \\ 
 &\geq \sqrt{1-\alpha}\sqrt{\beta}n^2 + n.
\end{align*}

By Jensen's inequality, we get

\begin{align*}
\sum m_v &\geq \sum \binom{\mu_v+1}{2}\geq
n\binom{\frac{1}{n}\sum(\mu_v+1)}{2}\\
& \geq n\binom{\sqrt{1-\alpha}\sqrt{\beta}n+1}{2} \\ 
& \geq (1-\alpha)\binom{n}{2}\beta n.
\end{align*}

Since the total number of missing edges in $G$ is at most $(1-\alpha)\binom{n}{2}$, by the pigeon-hole principle, there is a missing edge $\bar{e}$ and a subset of vertices $S\subset V$, with $|S|\geq \beta n$, such that $\bar{e}$ is in the neigborhood of every vertex in $S$. There can not be a missing edge contained in $S$, since together with $\bar{e}$ this would form an induced $K_{2,2}$. Therefore $S$ forms a clique which implies $\omega(G)\geq \beta n$. \qed

\section{Extending to hypergraphs}\label{sec:hyper}

We start by generalizing the two key steps from the proof in the previous section. The case $m=k$ of the following lemma was used implicitly in the proof of \cite[Theorem 2.2]{minki}.

\begin{lemma}\label{missing k-tuples}
Let $H=(V,E)$ be a $k$-uniform hypergraph and let $m\geq k$. If $H$ does not contain a complete $m$-tuple of missing edges, then any subset $S\subset V$ contains at least $\binom{m}{k}^{-1}\binom{\frac{1}{k}(|S|-\omega(H))}{k}$ missing edges of $H$.
\end{lemma}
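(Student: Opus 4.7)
My plan is to pass to a maximal family of pairwise disjoint missing edges inside $S$, apply the no-complete-$m$-tuple hypothesis to every $m$-element sub-family, and then double-count to bound the number of missing edges in $S$ from below.

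First, I would fix any maximal family $\{\tau_1,\dots,\tau_\mu\}$ of pairwise disjoint missing edges contained in $S$. By maximality, the set of uncovered vertices $S\setminus(\tau_1\cup\cdots\cup\tau_\mu)$ cannot contain a missing edge (any such edge would be disjoint from every $\tau_i$, contradicting maximality), so they form a clique in $H$ and hence have size at most $\omega(H)$. Since the $\tau_i$ cover exactly $k\mu$ vertices, this gives
\[\mu\geq \tfrac{1}{k}\bigl(|S|-\omega(H)\bigr).\]

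Next, for each $m$-element subset $I\subset[\mu]$ the family $\{\tau_i : i\in I\}$ consists of $m$ pairwise disjoint missing edges, and therefore, by the hypothesis, is not a complete $m$-tuple. There thus exists a transversal $(t_i)_{i\in I}$ with $t_i\in\tau_i$ such that $\{t_i : i\in I\}$ is not a clique in $H$; pick any missing $k$-subset of this transversal and call it $\bar{e}_I$. Since the $\tau_i$ are pairwise disjoint, each vertex of $\bar{e}_I$ lies in a distinct $\tau_i$ with $i\in I$, so $\bar{e}_I$ singles out a well-defined $k$-subset $K(\bar{e}_I)\subset I$.

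For the final step I would double-count. Crucially, because the $\tau_i$ are pairwise disjoint, the $k$-set $K(\bar{e})$ depends only on the missing edge $\bar{e}$ and not on the witnessing $m$-set $I$. Hence any fixed $\bar{e}$ can arise as $\bar{e}_I$ only for those $I\supset K(\bar{e})$, and there are $\binom{\mu-k}{m-k}$ such $I$. Since there are $\binom{\mu}{m}$ choices of $I$ in total, the procedure produces at least
\[\frac{\binom{\mu}{m}}{\binom{\mu-k}{m-k}}=\frac{\binom{\mu}{k}}{\binom{m}{k}}\]
distinct missing edges of $H$ inside $S$; combining with the first-step estimate and monotonicity of $\binom{x}{k}$ then yields the claimed bound. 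The one delicate point is the uniqueness of $K(\bar{e})$ used in the double count, which is exactly where the disjointness condition (\ref{disjointness}) in the definition of a complete $m$-tuple is essential.
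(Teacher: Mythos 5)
Your proposal is correct and follows essentially the same route as the paper's proof: take a maximal matching of missing edges in $S$, note the uncovered vertices form a clique to get $\mu\geq\frac{1}{k}(|S|-\omega(H))$, then apply the hypothesis to each $m$-subset of the matching and double-count, charging each resulting missing edge to at most $\binom{\mu-k}{m-k}$ of the $m$-subsets. The only difference is cosmetic: your explicit well-defined map $K(\bar{e})$ is the paper's observation that a given missing edge can witness the failure for at most $\binom{t-k}{m-k}$ distinct $m$-tuples.
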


\begin{proof}
We may assume $|S|>\omega(H)$, otherwise there is nothing to prove, so therefore $S$ contains at least one missing edge. Let $\tau_1, \dots, \tau_t$ be a maximal matching of missing edges contained in $S$. Since $S\setminus(\tau_1\cup \cdots \cup \tau_t)$ contains no missing edges, we have

\[kt = |\tau_1\cup \cdots \cup \tau_t| \geq |S|-\omega(H).\] 

Using the hypothesis that $H$ has no complete $m$-tuple of missing edges, it follows that for every $I\in \binom{[t]}{m}$ there is $J\in \binom{I}{k}$ and a missing edge $\tau \in M$ such that $|\tau\cap \tau_j| = 1$ for all $j\in J$. Since this particular missing edge can appear in this way for at most $\binom{t-k}{m-k}$ distinct $m$-tuples of $[t]$, it follows that $S$ contains at least 
\begin{align*}
\frac{\binom{t}{m}}{\binom{t-k}{m-k}} = \frac{\binom{t}{k}}{\binom{m}{k}}
\end{align*}
distinct missing edges. 
\end{proof}

In the proof of Theorem \ref{k-uniform thm} we will iteratively build up a set of missing edges (eventually ending up in a complete $m$-tuple of missing edges or a clique). This iterative process is defined by the following.

\begin{lemma} \label{iterated function}
Let $H=(V,E)$ be a $k$-uniform hypergraph on $n$ vertices with $\omega(H) \leq (c/2)n$, and suppose $H$ does not contain a complete $m$-tuple of missing edges. The following holds for any $i\geq 2$ and for all sufficiently large $n$:
Given a family $F_i \subset \binom{V}{i}$ with $|F_i|\geq c\binom{n}{i}$, there exists a family $F_{i-1}\subset \binom{V}{i-1}$ and a missing edge $\tau$ of $H$ such that 
\begin{enumerate}
    \item $|F_{i-1}|\geq \left(\frac{c}{12km}\right)^k \binom{n}{i-1}$, and  
    \item $\sigma\cup \{t\}\in F_i\:$ for all $\sigma\in F_{i-1}$ and all $t\in \tau$.
\end{enumerate}
\end{lemma}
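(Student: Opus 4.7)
The plan is to combine a Jensen-type averaging over $\binom{V}{i-1}$ with Lemma~\ref{missing k-tuples} and conclude with a pigeonhole on missing edges. For each $\sigma \in \binom{V}{i-1}$, define
\[
N(\sigma) := \{v \in V\setminus\sigma : \sigma \cup \{v\} \in F_i\},
\]
so that condition~(2) of the lemma becomes $\tau \subset N(\sigma)$ for every $\sigma \in F_{i-1}$. Double counting the pairs $(\sigma, v)$ with $v \in N(\sigma)$ gives $\sum_\sigma |N(\sigma)| = i\,|F_i| \geq c(n-i+1)\binom{n}{i-1}$, so the average of $|N(\sigma)|$ is at least $c(n-i+1)$.

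For each $\sigma$, apply Lemma~\ref{missing k-tuples} to the vertex subset $N(\sigma)\subset V$: it contains at least $\binom{m}{k}^{-1}\binom{(|N(\sigma)|-\omega(H))/k}{k}$ missing edges of $H$. Since the extended $\binom{\cdot}{k}$ is convex and non-decreasing, Jensen's inequality combined with the hypothesis $\omega(H)\leq (c/2)n$ gives, for $n \geq 4(i-1)$,
\[
\sum_{\sigma} \binom{(|N(\sigma)|-\omega(H))/k}{k} \geq \binom{n}{i-1}\binom{\frac{1}{k}\left(c(n-i+1)-\frac{c}{2}n\right)}{k} \geq \binom{n}{i-1}\binom{\frac{cn}{4k}}{k}.
\]
Hence the number of pairs $(\sigma,\tau)$ with $\sigma \in \binom{V}{i-1}$ and $\tau$ a missing edge contained in $N(\sigma)$ is at least $\binom{m}{k}^{-1}\binom{n}{i-1}\binom{cn/(4k)}{k}$.

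Finally, since there are at most $\binom{n}{k}$ missing edges in total, the pigeonhole principle produces one missing edge $\tau$ contained in $N(\sigma)$ for at least $\binom{m}{k}^{-1}\binom{n}{i-1}\binom{cn/(4k)}{k}/\binom{n}{k}$ distinct $\sigma$; set $F_{i-1}$ to be this collection, whence condition~(2) holds by construction. Verifying the lower bound $(c/(12km))^k\binom{n}{i-1}$ then reduces to the elementary estimate $\binom{cn/(4k)}{k}/\binom{n}{k} \geq (c/(5k))^k$ for $n$ sufficiently large, together with $\binom{m}{k}^{-1}\geq k!/m^k$; the resulting $k!\,(c/(5km))^k \binom{n}{i-1}$ comfortably exceeds $(c/(12km))^k \binom{n}{i-1}$ since $k! \geq (5/12)^k$. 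The main obstacle is expected to be arithmetic rather than conceptual: threading the constants through Jensen, Lemma~\ref{missing k-tuples}, and the pigeonhole so that the clean denominator $12km$ in the final bound is actually preserved.
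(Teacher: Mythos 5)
Your proposal is correct and follows essentially the same route as the paper: define the neighborhoods $N_\sigma$, double count to bound $\sum_\sigma |N_\sigma|$, apply Lemma~\ref{missing k-tuples} together with Jensen's inequality to the convex extended binomial, and finish by pigeonholing over the at most $\binom{n}{k}$ missing edges. The only differences are cosmetic choices in how the constants are absorbed (your $cn/(4k)$ threshold versus the paper's factor of $\tfrac12$ in front of $\binom{cn/(2k)}{k}$), and your arithmetic does land above the stated $(c/(12km))^k$ bound.
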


\begin{proof}
For every $\sigma\in \binom{V}{i-1}$ define the set 
$N_\sigma = \{x\in V : \sigma\cup \{x\}\in F_{i}\}$. 
We want to lower bound the size of the set
\[X = \{(\sigma, \tau) : \sigma\in \textstyle\binom{V}{i-1}, \tau\in M\cap \textstyle{\binom{N_\sigma}{k}}\}.\]

By Lemma \ref{missing k-tuples} and Jensen's inequality, we get

\begin{align*}
|X| & \geq \binom{m}{k}^{-1} \sum_{\sigma\in\binom{V}{i-1}} \binom{\frac{1}{k}(|N_\sigma|-(c/2)n)}{k} \\
& \geq 
\binom{m}{k}^{-1}
\binom{n}{i-1}
\binom{\binom{n}{i-1}^{-1}\frac{1}{k}\sum_{\sigma\in\binom{V}{i-1}}(|N_\sigma|-(c/2)n)}{k}.
\end{align*}

Using  $\sum_{\sigma\in\binom{V}{i-1}}|N_\sigma| = i|F_i| \geq ic\binom{n}{i}$ and $\binom{n}{i} > \frac{(n-i)}{i}\binom{n}{i-1}$, we get

\begin{align*}
|X| & >
\binom{m}{k}^{-1}
\binom{n}{i-1}\binom{\frac{c}{2k}n - \frac{ci}{k}}{k}. 
\end{align*}

Since the term $\frac{ci}{k}$ is a constant which does not depend on  $n$, it follows that for all sufficiently large $n$ (depending only on $c, i$, and $k$), we get 

\begin{align*}
|X| & > \frac{1}{2}\binom{m}{k}^{-1}\binom{n}{i-1}\binom{\frac{c}{2k}n}{k} 
\geq \left(\frac{c}{12km}\right)^k\binom{n}{i-1}\binom{n}{k}.
\end{align*}

By averaging, there exists a missing edge $\tau\in M$ such that $\tau\subset N_\sigma$ for at least $\frac{|X|}{|M|}$ distinct $\sigma\in \binom{[n]}{i-1}$. The lemma now follows since $|M|\leq \binom{n}{k}$.
\end{proof}

\bigskip

\begin{proof}[Proof of Theorem \ref{k-uniform thm}] Let $f(x) = (\frac{x}{12km})^k$. (Note that $0<f(x)<x/2$ for all $x\in(0,1)$.) Define $\alpha_0 = \alpha$ and $\alpha_i = f(\alpha_{i-1})$ for all $i\geq 1$. We will show the theorem holds with $\beta = \beta(\alpha,k,m) = \alpha_{m-1}>0$.

\medskip

Let $F_m\subset \binom{V}{m}$ be the the set of $m$-tuples that form cliques in $H$, and so by hypothesis, we have $|F_m| = c_m(H) \geq \alpha_0\binom{n}{m}$. Assuming both $\omega(H) \leq \beta n$ and that $H$ does not contain a complete $m$-tuple of missing edges, we can apply Lemma \ref{iterated function} iteratively, starting with $F_m$, obtaining a family $F_{m-1}$, to which we apply Lemma \ref{iterated function}, and so on. Moreover, at each step we pick up a new missing edge. 

For every $1 \leq i < m$, we claim that after the $i$th application of Lemma \ref{iterated function} we have obtained a subfamily 
$F_{m-i}\subset \binom{V}{m-i}$ and 
pairwise disjoint missing edges $\tau_1, \dots, \tau_i$ such that 
\begin{itemize}
    \item $|F_{m-i}|\geq \alpha_{i}\binom{n}{m-i}$, and
    \item $\sigma \cup \{t_1, \dots, t_i\}\in F_m$ for all $\sigma\in F_{m-i}$ and all $t_1\in \tau_1, \dots, t_i\in \tau_i$.
\end{itemize}
The claim is true for $i=1$, as this is just the statement of Lemma \ref{iterated function}. Assuming it is true for some $i$, we now check that it holds for $i+1$. 

Applying Lemma \ref{iterated function} to $F_{m-i}$, we obtain a family $F_{m-i-1} \subset \binom{V}{m-i-1}$ and a missing edge $\tau_{i+1}$ such that 
\begin{itemize}
    \item $|F_{m-i-1}|\geq \alpha_{i+1}\binom{n}{m-i-1}$, and 
    \item $\sigma \cup \{t_{i+1}\}\in F_{m-i}$ for all $\sigma\in F_{m-i-1}$ and all $t_{i+1}\in \tau_{i+1}$.
\end{itemize}
But our assumption on $F_{m-i}$ therefore implies that $\sigma \cup \{t_1, \dots, t_{i+1}\}\in F_m$ for all $\sigma\in F_{m-i-1}$ and $t_1\in \tau_1, \dots, t_{i+1}\in \tau_{i+1}$. Note that this also implies that $\tau_{i+1}$ must be disjoint from every $\tau_1, \dots, \tau_i$. This proves the inductive step.

After $m-1$ applications of Lemma \ref{iterated function}, we end up with a subset $F_1\subset V$ and pairwise disjoint missing edges $\tau_1, \dots, \tau_{m-1}$ such that 
\begin{itemize}
    \item $|F_1|\geq \alpha_{m-1} n = \beta n$, and
    \item $\{t\}\cup \{t_1, \dots, t_{m-1}\}\in F_m$ for all $t\in F_1$ and all $t_1\in \tau_1, \dots, t_{m-1}\in \tau_{m-1}$.
\end{itemize}

If $F_1$ contains a missing edge $\tau_m\in M$, then $\{\tau_1, \dots, \tau_m\}$ would be a complete $m$-tuple of missing edges in $H$. Since we assumed this does not exist, it follows that $F_1$ is a clique in $H$, and so $\omega(H)\geq \beta n$.
\end{proof}

{\em Remark.} In the proof above, Lemma \ref{iterated function} was used $m-1$ times, and it follows that for fixed $k$ and $m$ we have $\beta= \Omega(\alpha^{k^{m-1}})$. If we consider the optimal function $\beta = \beta(\alpha, k, m)$ for which Theorem \ref{k-uniform thm} holds, it is worth noting that $\beta\to 1$ as $\alpha \to 1$. This does not follow from our definition of $\beta$ in the proof above, but can be deduced directly from Lemma \ref{missing k-tuples} by setting $S=V$. The lemma then tells us that if $\omega(H)\leq (1-\epsilon)n$, then $H$ has at least $\binom{m}{k}^{-1}\binom{\frac{\epsilon}{k}n}{k}$ missing edges. It is easy to show by a simple double-counting argument that this implies $c_m(H)\leq (1-\delta)\binom{n}{m}$ for some absolute $\delta>0$.

\section{Applications} \label{sec:frac}

Here we present some applications of Theorem \ref{k-uniform thm} related to certain Helly-type theorems and the intersection patterns of convex sets. For more information about these types of results we refer the reader to the surveys \cite{ADLS, eckhoff-surv}.

Helly's theorem \cite{helly} asserts that if every $d+1$ members of a finite family of convex sets in $\mathbb{R}^d$ have a point in common, then there is a point in common to every member of the family. Among the numerous generalizations and extensions of Helly's theorem we focus on two important generalizations. The first one is the {\em Colorful Helly Theorem} discovered by Lov{\'a}sz and reported by B{\'a}r{\'a}ny in \cite{col-hell}.

\begin{theorem*}[Colorful Helly]
Let $F_1, \dots, F_{d+1}$ be finite families of convex sets in $\mathbb{R}^d$. Suppose for every choice $K_1\in F_1$, $\dots$, $K_{d+1}\in F_{d+1}$ we have $\bigcap_{i=1}^{d+1}K_i\neq \emptyset$. Then for one of the families $F_i$ we have $\bigcap_{K\in F_i}K\neq \emptyset$.
\end{theorem*}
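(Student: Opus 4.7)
My plan is to argue by contradiction via a distance-minimization scheme in the spirit of B\'ar\'any's original proof. Assume toward contradiction that $\bigcap_{K\in F_i}K=\emptyset$ for every $i\in[d+1]$. After standard reductions (take closures and intersect with a sufficiently large ball, chosen so that every colorful intersection retains a point), I may assume every $K$ is compact and convex. For each colorful tuple $T=(K_1,\ldots,K_{d+1})\in F_1\times\cdots\times F_{d+1}$ the hypothesis gives that $\bigcap_i K_i$ is a nonempty compact convex set, hence contains a unique point $p_T$ nearest the origin. Among all colorful tuples, fix one $T^\ast=(K_1^\ast,\ldots,K_{d+1}^\ast)$ minimizing $\|p_T\|$, and set $p^\ast\defeq p_{T^\ast}$.

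Since no monochromatic intersection is nonempty, for each $i\in[d+1]$ there exists $L_i\in F_i$ with $p^\ast\notin L_i$. The hyperplane separation theorem then yields unit vectors $v_i\in\mathbb{R}^d$ and scalars $c_i>v_i\cdot p^\ast$ with $L_i\subseteq\{x:v_i\cdot x\geq c_i\}$. Because $v_1,\ldots,v_{d+1}$ are $d+1$ vectors in $\mathbb{R}^d$, they admit a nontrivial linear dependence, which after separating positive from negative coefficients can be written in the form $\sum_{i\in I}\alpha_iv_i=\sum_{j\in J}\beta_jv_j$ with disjoint $I,J\subseteq[d+1]$ and $\alpha_i,\beta_j>0$.

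To reach a contradiction, I would combine this dependence with the Karush--Kuhn--Tucker optimality conditions for $p^\ast$, which express $-p^\ast$ as a nonnegative combination of the outward normals to those $K_i^\ast$ that are active at $p^\ast$. A careful sign-chase should then identify an index set (say $I$) such that swapping each $K_i^\ast$ with $L_i$, $i\in I$, produces a colorful tuple $T'$ whose intersection is nonempty by the colorful hypothesis and contains a point $q$ with $v_i\cdot q\geq c_i>v_i\cdot p^\ast$ for $i\in I$ and $q\in K_j^\ast$ for $j\notin I$. These inequalities, together with the dependence $\sum_{i\in I}\alpha_iv_i-\sum_{j\in J}\beta_jv_j=0$ and the KKT identity, should force $\|q\|<\|p^\ast\|$, contradicting the minimality of $T^\ast$.

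The main obstacle is precisely this last linear-algebra bookkeeping: turning the abstract dependence among the $v_i$, once mated with the KKT representation of $-p^\ast$, into a concrete index set $I$ and a point $q$ in the swapped colorful tuple that is strictly closer to the origin than $p^\ast$. Getting the sign pattern right so that the separating inequalities combine with the dependence and the KKT identity to yield the strict decrease $\|q\|<\|p^\ast\|$ (uniformly over whether one or both of $I,J$ are nonempty) is the delicate technical crux. Once this sign-chase is completed, the contradiction is immediate and the theorem follows.
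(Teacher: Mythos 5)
The paper does not actually prove this statement; it quotes the Colorful Helly Theorem as a known result of Lov\'asz (reported by B\'ar\'any), so there is no in-paper argument to compare against. Judged on its own terms, your proposal has a genuine gap at exactly the point you flag as ``bookkeeping,'' and I believe the extremization is pointed in the wrong direction. If $T^\ast$ \emph{minimizes} $\|p_T\|$, you must exhibit a colorful tuple containing a point strictly \emph{closer} to the origin; but the only information you have about the swapped tuple $T'$ is that each of its points $q$ lies beyond the separating hyperplanes, $v_i\cdot q\geq c_i>v_i\cdot p^\ast$ for $i\in I$. Combining these with the dependence $\sum_{i\in I}\alpha_i v_i=\sum_{j\in J}\beta_j v_j$ only yields $\bigl(\sum_{j\in J}\beta_j v_j\bigr)\cdot(q-p^\ast)>0$, i.e.\ control of $q-p^\ast$ in a single direction that has no a priori relation to the direction $-p^\ast$ appearing in the KKT identity; nothing forces $\|q\|<\|p^\ast\|$ (already in $\mathbb{R}^1$ a set $L_1$ separated from $p^\ast$ can lie entirely farther from the origin). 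The dependence among the $d+1$ separating normals of the $L_i$ is simply not the right object to mate with the optimality conditions at $p^\ast$.

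The workable version of your scheme runs the other way and needs no contradiction. First reduce to polytopes by replacing each $K$ with $\mathrm{conv}(K\cap P)$, where $P$ is one witness point per colorful tuple (taking closures is unsound in the conclusion direction: a common point of the closures need not lie in the original sets). Then choose $T^\ast$ \emph{maximizing} $\mathrm{dist}(0,\bigcap T)$ and let $p^\ast$ be the nearest point of $\bigcap T^\ast$. The KKT condition gives $-p^\ast\in\sum_i N_{K_i^\ast}(p^\ast)$, and Carath\'eodory for cones (this is where the linear dependence of $d+1$ vectors in $\mathbb{R}^d$ legitimately enters) lets you write $-p^\ast$ as a conic combination of at most $d$ generators, so some color $j$ is inactive: $p^\ast$ is already the nearest point of $Q=\bigcap_{i\neq j}K_i^\ast$. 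For any $L\in F_j$ the swapped tuple $T'$ satisfies $\emptyset\neq\bigcap T'\subseteq Q$, hence $\mathrm{dist}(0,\bigcap T')\geq\|p^\ast\|$, while maximality gives the reverse inequality; uniqueness of the projection onto $Q$ then forces $p^\ast\in\bigcap T'\subseteq L$. Thus $p^\ast\in\bigcap_{L\in F_j}L$. I recommend rebuilding your argument around this maximization and the Carath\'eodory-for-cones reduction of the active normals, discarding the separating hyperplanes for the $L_i$ altogether.
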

Note that we recover Helly's theorem in the case when $F_1=\cdots = F_{d+1}$. The second generalization of Helly's theorem we are interested in is the {\em Fractional Helly Theorem} due to Katchalski and Liu \cite{katch-abb}. (See also \cite[Chapter 8]{mato}.)

\begin{theorem*}[Fractional Helly]
For every $d\geq 1$ and $\alpha\in(0,1)$ there exists a $\beta=\beta(\alpha,d)\in(0,1)$ with the following property:
Let $F$ be a family of $n>d+1$ convex sets in $\mathbb{R}^d$ and suppose at least $\alpha\binom{n}{d+1}$ of the $(d+1)$-tuples in $F$ have non-empty intersection. Then there exists some $\beta n$ members of $F$ whose intersection is non-empty.
\end{theorem*}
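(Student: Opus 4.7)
The plan is to encode the family $F$ as a $(d+1)$-uniform hypergraph to which Theorem \ref{k-uniform thm} applies, and then convert the resulting clique into a common intersection point using the classical Helly theorem. Specifically, I would take $H=(F,E)$ where the edges are the $(d+1)$-element subfamilies $\{K_1,\dots,K_{d+1}\}\subset F$ with $\bigcap_i K_i\neq \emptyset$. The hypothesis of Fractional Helly then translates directly into $c_{d+1}(H)\geq \alpha \binom{n}{d+1}$, so setting $k=m=d+1$ places us in the setting of Theorem \ref{k-uniform thm} with $c_m(H)$ simply counting edges.

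The crux is to verify that $H$ contains no complete $(d+1)$-tuple of missing edges, and this is exactly what the Colorful Helly Theorem gives. Suppose for contradiction $\{\tau_1,\dots,\tau_{d+1}\}$ were such a family. Each $\tau_i$ is a $(d+1)$-element subfamily of convex sets with $\bigcap_{K\in\tau_i} K=\emptyset$ (since $\tau_i\in M$), while condition (\ref{transversal}) of the definition of a complete $m$-tuple says that every transversal $\{t_1,\dots,t_{d+1}\}$ with $t_i\in\tau_i$ is an edge of $H$, i.e.\ has non-empty intersection. Viewing $\tau_1,\dots,\tau_{d+1}$ as colour classes, this is precisely the hypothesis of the Colorful Helly Theorem in $\mathbb{R}^d$, so its conclusion forces some $\tau_i$ to have non-empty intersection, contradicting $\tau_i\in M$.

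With this checked, Theorem \ref{k-uniform thm} applied with $k=m=d+1$ furnishes a clique $S\subset F$ with $|S|\geq \beta n$, where $\beta=\beta(\alpha,d+1,d+1)$. By definition of a clique in $H$, every $(d+1)$-element subfamily of $S$ has non-empty intersection, and Helly's theorem then yields $\bigcap_{K\in S} K\neq \emptyset$, which is the conclusion of Fractional Helly with the same $\beta$. The only real conceptual step is recognising that Colorful Helly is precisely the statement ruling out a complete tuple of missing edges in this hypergraph; once that correspondence is in hand, the deduction from Theorem \ref{k-uniform thm} is essentially automatic, and there is no heavy calculation to grind through.
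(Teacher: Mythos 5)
Your proposal is correct and follows the paper's proof essentially verbatim: the same hypergraph $H=(F,E)$, the same use of the Colorful Helly Theorem to exclude a complete $(d+1)$-tuple of missing edges, the same application of Theorem \ref{k-uniform thm} with $k=m=d+1$, and the same final appeal to Helly's theorem. The only difference is that you spell out the contradiction argument for the Colorful Helly step, which the paper leaves implicit.
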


Our first application is a new proof of the Fractional Helly Theorem, which  uses the Colorful Helly Theorem and Theorem \ref{k-uniform thm}.

\begin{proof}[Proof of the fractional Helly theorem]  Define a $(d+1)$-uniform hypergraph $H = (F,E)$ where $E = \{\sigma\in \binom{F}{d+1} :\bigcap_{K\in\sigma} K \neq \emptyset\}$.  By hypothesis, $H$ has at least $\alpha\binom{n}{d+1}$ edges, and by the Colorful Helly Theorem $H$ does not contain a complete $(d+1)$-tuple of missing edges. So by Theorem \ref{k-uniform thm}, with $k=m=d+1$, there exists a $\beta>0$ such that $H$ has a clique on $\beta n$ vertices. By Helly's theorem, the members of  $F$ contained in this clique have non-empty intersection.    
\end{proof}

The argument above is general enough to give a proof of a topological generalization of the fractional Helly theorem proved by Kalai \cite{kalai-upper}, and independently by Eckhoff \cite{eckhoff} in a slightly restricted setting. 

Let $K$ be a finite simplicial complex. For an integer $i\geq 0$, let $f_i(K)$ denote the number of $i$-dimensional faces in $K$.
We say that $K$ is {\em $d$-Leray} if $\tilde{H}_i(X) = 0$ for all induced subcomplexes $X\subset K$ and all $i\geq d$. (Here $\tilde{H}_i(X)$ denotes the $i$-dimensional homology of $X$ with coefficients in $\mathbb{Q}$.)

The following is a consequence of the ``upper-bound theorem'' for $d$-Leray complexes due to Kalai \cite{kalai-upper}, and implies the Fractional Helly Theorem (via the Nerve theorem, see e.g. \cite[Corollary 4G.3]{hatcher}).

\begin{theorem*}[Topological Fractional Helly] \label{top-frac-helly} For every $d\geq 1$ and $\alpha\in (0,1)$ there exists a $\beta=\beta(\alpha,d)\in (0,1)$ with the following property:
If $K$ is a $d$-Leray complex with $f_0(K)=n$ and $f_d(K)\geq \alpha\binom{n}{d+1}$, then $K$ has dimension at least $\beta n-1$.
\end{theorem*}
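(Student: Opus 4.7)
The plan is to follow the structure of the proof of the (geometric) fractional Helly theorem given above, substituting the topological colorful Helly theorem \cite{top-col-hel} for the geometric one and using a standard Helly-type consequence of $d$-Lerayness in place of Helly's theorem itself. First, I would associate to $K$ the $(d+1)$-uniform hypergraph $H = (V,E)$ whose vertex set is $V = V(K)$ and whose edges are precisely the $d$-faces of $K$. Since $|E| = f_d(K) \geq \alpha\binom{n}{d+1}$, the density assumption of Theorem \ref{k-uniform thm} is satisfied with the choice $k = m = d+1$.

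The key step is to verify that $H$ contains no complete $(d+1)$-tuple of missing edges. Suppose toward a contradiction that $\{\tau_1,\dots,\tau_{d+1}\}$ were such a tuple, and let $L = K[\tau_1\cup\cdots\cup\tau_{d+1}]$. Being an induced subcomplex of $K$, the complex $L$ is again $d$-Leray. The rainbow condition built into the definition of a complete tuple of missing edges says precisely that every $(d+1)$-element choice with one vertex from each $\tau_i$ spans a simplex of $L$, so the topological colorful Helly theorem \cite{top-col-hel} applied to $L$ with the color partition $\tau_1,\dots,\tau_{d+1}$ produces some index $i$ for which $\tau_i$ spans a simplex of $L$. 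This contradicts $\tau_i$ being a missing edge of $H$.

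With this verified, Theorem \ref{k-uniform thm} supplies a vertex set $S\subset V$ with $|S|\geq \beta n$ every $(d+1)$-subset of which is a $d$-face of $K$. To finish, I would invoke the Helly-type property of $d$-Leray complexes: any vertex set whose $(d+1)$-subsets all span simplices must itself span a simplex. This is a short induction on $|S|$, because otherwise the inductive step would force the induced subcomplex $K[S]$ to coincide with $\partial \Delta_S$, a topological sphere of dimension $|S|-2\geq d$, whose nontrivial top homology contradicts $d$-Lerayness. Hence $S$ itself is a simplex of $K$ and $\dim K \geq |S|-1 \geq \beta n - 1$.

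The only point requiring outside input is the topological colorful Helly theorem; everything else is either Theorem \ref{k-uniform thm} itself or a direct consequence of the definition of $d$-Leray. In particular, the argument is a faithful ``topologization'' of the proof of fractional Helly just given, and the main conceptual obstacle--if any--is recognizing that the Leray property passes to the induced subcomplex on $\tau_1\cup\cdots\cup\tau_{d+1}$ so that the colorful theorem can be brought to bear on the missing tuple.
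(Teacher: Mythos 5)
Your proposal is correct and follows essentially the same route as the paper: the same auxiliary $(d+1)$-uniform hypergraph, the same application of Theorem \ref{k-uniform thm} with $k=m=d+1$, and the same use of the Topological Colorful Helly Theorem with the partition matroid on $\tau_1,\dots,\tau_{d+1}$ to exclude a complete tuple of missing edges. The only (harmless) differences are cosmetic: you apply colorful Helly to the induced subcomplex on $\tau_1\cup\cdots\cup\tau_{d+1}$ rather than to $K$ itself, and you spell out the boundary-of-a-simplex induction behind the paper's one-line claim that a complete $d$-skeleton in a $d$-Leray complex forces a full simplex.
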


Kalai's proof of this result relies on his technique of algebraic shifting. (See also \cite[Section 6]{AKKM} for other algebraic approaches.)
We want to give a proof of the Topological Fractional Helly Theorem using Theorem \ref{k-uniform thm}, but first we need the following auxiliary result, due to Kalai and Meshulam \cite[Theorem 1.6]{top-col-hel}. (See also \cite{floy} for an algebraic generalization.)

\begin{theorem*}[Topological Colorful Helly]
Let $X$ be a $d$-Leray complex on the vertex set $V$ and let $M$ be a matroidal complex on $V$ such that $M\subset X$. Then there exists a simplex $\tau\in X$ such that $\rho(V\setminus \tau)\leq d$.
\end{theorem*}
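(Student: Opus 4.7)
The plan is to induct on the matroid rank $r = \rho(V)$. If $r \leq d$, the empty simplex $\tau = \emptyset \in X$ already satisfies the conclusion. For the inductive step with $r \geq d+1$, I pick a non-loop $v \in V$, set $V' = V \setminus \{v\}$, and focus on the link--contraction pair $(\mathrm{lk}_X(v), M/v)$ on $V'$: the contraction $M/v$ is a matroid of rank $r-1$, and the inclusion $M/v \subset \mathrm{lk}_X(v)$ is immediate, since an independent $I$ in $M/v$ satisfies $I \cup \{v\} \in M \subset X$, placing $I$ in the link.

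A preliminary lemma needed before invoking the induction is that $\mathrm{lk}_X(v)$ is itself $d$-Leray. This follows from the reduced Mayer--Vietoris sequence applied to the decomposition $X|_W = X|_{W \setminus v} \cup \mathrm{star}_{X|_W}(v)$ on every induced subcomplex $W$ containing $v$: the star is contractible, the intersection is $\mathrm{lk}_X(v)|_{W\setminus v}$, and the $d$-Leray vanishings of $X|_W$ and $X|_{W\setminus v}$ in degrees $\geq d$ force $\widetilde H_i(\mathrm{lk}_X(v)|_{W\setminus v}) = 0$ for $i \geq d$. A naive application of the induction hypothesis to $(\mathrm{lk}_X(v), M/v)$ then produces $\tau_0 \in \mathrm{lk}_X(v)$ with $\rho_{M/v}(V' \setminus \tau_0) \leq d$, which lifts to $\tau = \tau_0 \cup \{v\} \in X$ with $\rho_M(V \setminus \tau) \leq \rho_{M/v}(V' \setminus \tau_0) + 1 \leq d+1$ via the non-loop identity $\rho_M(S \cup v) = \rho_{M/v}(S) + 1$---one degree short of the desired bound.

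Closing this off-by-one gap is the main obstacle, and the reason the hypothesis that $M$ is matroidal (rather than merely a subcomplex of $X$) is essential. The plan is to exploit the fact that matroid complexes are pure shellable and Cohen--Macaulay, so that the reduced homology of every induced subcomplex of $M$ is concentrated in top dimension---a property preserved by contraction. Feeding this extra rigidity back into the Mayer--Vietoris bookkeeping sharpens the vanishing on $\mathrm{lk}_X(v)$ by one additional degree, effectively allowing the induction to be run with Leray parameter $d-1$ rather than $d$ and yielding the stronger bound $\rho_{M/v}(V' \setminus \tau_0) \leq d-1$. An equivalent route is to choose $v$ by an extremal principle (for instance, as an element of a basis of $M$ optimizing a Betti-number--weighted quantity, or via a matroid exchange augmentation), which activates the same rigidity without an explicit Leray-shift. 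Either way, the matroidal hypothesis is what converts the ambient $d$-Leray vanishing into a sharp enough inductive step, producing $\tau = \tau_0 \cup \{v\} \in X$ with $\rho_M(V \setminus \tau) \leq d$ and closing the induction.
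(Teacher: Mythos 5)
First, note that the paper does not prove this statement at all: it is quoted verbatim as \cite[Theorem 1.6]{top-col-hel} (Kalai--Meshulam) and used as a black box, so there is no internal proof to compare against. Your attempt must therefore stand on its own as a proof of the Kalai--Meshulam theorem, and it does not. The setup is fine as far as it goes: the base case, the inclusion $M/v\subset \mathrm{lk}_X(v)$, the Mayer--Vietoris argument showing that links of $d$-Leray complexes are $d$-Leray, and the rank identity $\rho_M(S)\leq \rho_{M/v}(S)+1$ are all correct, and you correctly diagnose that the naive induction only yields $\rho_M(V\setminus\tau)\leq d+1$. But the entire content of the theorem lies in closing that off-by-one gap, and the final paragraph does not close it --- it only announces an intention to. The specific mechanism you propose is not sound: the Cohen--Macaulay/shellability of $M$ (and of its restrictions and contractions) is a statement about the homology of $M$, and it cannot ``sharpen the vanishing on $\mathrm{lk}_X(v)$ by one additional degree,'' because the Leray number of $\mathrm{lk}_X(v)$ is an invariant of $X$ alone; containing a highly connected subcomplex $M/v$ places no upper bound on the homology of $\mathrm{lk}_X(v)$ or of its induced subcomplexes. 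The alternative ``extremal choice of $v$'' route is likewise only a slogan with no argument attached. If the gap were closable by such a one-line observation, the induction would in fact prove the statement with $d$ replaced by any smaller value, which is absurd.

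For what it is worth, the actual Kalai--Meshulam proof does use the two ingredients you identify (Leray-type vanishing for $X$ and the $(\rho-2)$-connectivity of matroid complexes coming from shellability), but it combines them through a genuinely homological argument --- comparing the homology of $X$ and $M$ via the induced subcomplexes $X[V\setminus\tau]$ and a careful induction on $|V|$ using both deletion and contraction simultaneously --- rather than through a single link--contraction step. As written, your proposal is a plan for a proof with its central step missing, not a proof.
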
 (Here $\rho$ denotes the rank function of the matroid $M$.)
Let us describe the special case of the Topological Colorful Helly Theorem that we  need. Let $V_1, \dots, V_{d+1}$ be distinct finite sets with $|V_1| = \cdots = |V_{d+1}|= d+1$, and let $V = V_1 \cup \cdots \cup V_{d+1}$. Define the simplicial complex $M_{d+1}$ as the join of the $V_i$, that is
\[M_{d+1} = V_1 * \cdots * V_{d+1} = \{\sigma\subset V : |\sigma \cap V_i|\leq 1, \text{ for all } i\}.\]
Note that $M_{d+1}$ is the matroidal complex of the partition matroid induced by the $V_i$, which has rank $d+1$. 

Suppose $X$ is a $d$-Leray complex such that $M_{d+1}\subset X$. By the Topological Colorful Helly Theorem, there is face $\tau \in X$ such that $\rho(X\setminus \tau)\leq d$. But this means that for some $i$ we have $V_i\subset \tau$, so in particular one of the $V_i$ is a face in $X$.

\begin{proof}[Proof of the Topological Fractional Helly Theorem.]
Let $H=(V,E)$ be the $(d+1)$-uniform hypergraph where $V$ is the vertex set of $K$ and $E$ is the set of $d$-dimensional faces of $K$. Thus, $H$ has $n$ vertices and at least $\alpha\binom{n}{d+1}$ edges. In order to apply Theorem \ref{k-uniform thm}, with $k=m=d+1$, we need to show that $H$ does not contain a complete $k$-tuple of missing edges. But this is precisely the special case of the Topological Colorful Helly Theorem we described above. 

Now Theorem \ref{k-uniform thm} implies that there is a clique in $H$ on at least $\beta n$ vertices, which corresponds to a subcomplex $K'\subset K$ on at least $\beta n$ vertices whose $d$-dimensional skeleton is complete. The $d$-Leray property now implies that $K'$ is a full simplex. 
\end{proof}

{\em Remark.}
It should be noted that Kalai's ``upper-bound theorem'' actually implies the Topological Fractional Helly Theorem with $\beta = 1-(1-\alpha)^{\frac{1}{d+1}}$, which is best possible. Our proof gives a far weaker bound on $\beta$, but this is not surprising since the $d$-Leray property is much stronger than excluding a complete set of missing edges in $H$. 
It would be interesting to find examples for the hypergraph setting of Theorem \ref{k-uniform thm} which give non-trivial upper bounds on $\beta$.

\medskip

{\em Acknowledgements.} The author thanks Xavier Goaoc and Seunghun Lee for pointing out some mistakes in an earlier version of this manuscript.

\end{document}